\def\NZQ{\Bbb}               % the font for N,Z,Q,R,C
\def\ZZ{{\NZQ Z}}
\def\frk{\frak}               % font for "Fraktur"
\def\Phi{{\frk n}}
\def\Phi{{\frk N}}
\def\opn#1#2{\def#1{\operatorname{#2}}} % to make operators
\opn\chara{char} \opn\length{\ell} \opn\pd{pd} \opn\rk{rk}
\opn\projdim{proj\,dim} \opn\injdim{inj\,dim} \opn\rank{rank}
\opn\depth{depth} \opn\grade{grade} \opn\height{height}
\opn\embdim{emb\,dim} \opn\codim{codim}
\opn\Tr{Tr} \opn\bigrank{big\,rank}
\opn\superheight{superheight}\opn\lcm{lcm}
\opn\trdeg{tr\,deg}%\emph{
\opn\reg{reg} \opn\lreg{lreg} \opn\ini{in} \opn\lpd{lpd}
\opn\size{size}\opn\bigsize{bigsize}
\opn\cosize{cosize}\opn\bigcosize{bigcosize}
\opn\sdepth{sdepth}\opn\sreg{sreg}
\opn\link{link}\opn\fdepth{fdepth}
\opn\index{index}
\opn\index{index}
\opn\indeg{indeg}
\opn\N{N}
\opn\SSC{SSC}
\opn\SC{SC}
\opn\lk{lk}
\opn\div{div} \opn\Div{Div} \opn\cl{cl} \opn\Cl{Cl}
\opn\Spec{Spec} \opn\Supp{Supp} \opn\supp{supp} \opn\Sing{Sing}
\opn\Ass{Ass} \opn\Min{Min}\opn\Mon{Mon} \opn\dstab{dstab} \opn\astab{astab}
\opn\Syz{Syz}
\opn\reg{reg}
\opn\Ann{Ann} \opn\Rad{Rad} \opn\Soc{Soc}
\opn\Im{Im} \opn\Ker{Ker} \opn\Coker{Coker} \opn\Am{Am}
\opn\Hom{Hom} \opn\Tor{Tor} \opn\Ext{Ext} \opn\End{End}\opn\Der{Der}
\opn\Aut{Aut} \opn\id{id}
\opn\nat{nat}
\opn\pff{pf}%   \pf exists already
\opn\Pf{Pf} \opn\GL{GL} \opn\SL{SL} \opn\mod{mod} \opn\ord{ord}
\opn\Gin{Gin} \opn\Hilb{Hilb}\opn\sort{sort}
\opn\initial{init}
\opn\ende{end}
\opn\height{height}
\opn\type{type}
\opn\aff{aff} \opn\con{conv} \opn\relint{relint} \opn\st{st}
\opn\lk{lk} \opn\cn{cn} \opn\core{core} \opn\vol{vol}
\opn\link{link} \opn\star{star}\opn\lex{lex}
\opn\gr{gr}
\def\pot#1#2{#1[\kern-0.28ex[#2]\kern-0.28ex]}
\opn\dirlim{\underrightarrow{\lim}}
\opn\inivlim{\underleftarrow{\lim}}
\def\Implies{\ifmmode\Longrightarrow \else
        \unskip${}\Longrightarrow{}$\ignorespaces\fi}
\def\implies{\ifmmode\Rightarrow \else
        \unskip${}\Rightarrow{}$\ignorespaces\fi}
\def\iff{\ifmmode\Longleftrightarrow \else
        \unskip${}\Longleftrightarrow{}$\ignorespaces\fi}
\newtheorem{Theorem}{Theorem}[section]
 \newtheorem{Lemma}[Theorem]{Lemma}
 \newtheorem{Corollary}[Theorem]{Corollary}
 \newtheorem{Proposition}[Theorem]{Proposition}
 \newtheorem{Remark}[Theorem]{Remark}
 \newtheorem{Definition}[Theorem]{Definition}
\let\epsilon\varepsilon
\let\kappa=\varkappa
\def\qed{\ifhmode\textqed\fi
      \ifmmode\ifinner\quad\qedsymbol\else\dispqed\fi\fi}
\def\textqed{\unskip\nobreak\penalty50
       \hskip2em\hbox{}\nobreak\hfil\qedsymbol
       \parfillskip=0pt \finalhyphendemerits=0}
\def\dispqed{\rlap{\qquad\qedsymbol}}
\opn\dis{dis}
\def\pnt{{\raise0.5mm\hbox{\large\bf.}}}
\opn\Lex{Lex}
\begin{document}

\title{On the asymptotic linearity of
 reduction number}
\author{ Dancheng Lu}

\address{Dancheng Lu, Department of Mathematics, Soochow University, 215006 Suzhou, P.R.China}
\email{ludancheng@suda.edu.cn}

\keywords{Reduction number, Asymptotic linearity, Generalized regularity function}

\subjclass[2010]{Primary 13D45; Secondary 13C99.}

\begin{abstract} Let $R$ be a standard graded  algebra over an infinite  field $K$ and $M$  a finitely generated $\ZZ$-graded $R$-module. Then for any graded ideal $I\subseteq R_+$ of $R$, we show that there exist  integers $\epsilon_1\geq \epsilon_2$ such that  $r(I^nM)=\rho_I(M)n+\epsilon_1$ and $D(I^nM)=\rho_I(M)n+\epsilon_2$ for $n\gg0$. Here $r(M)$ and  $D(M)$ denote the reduction number of $M$ and the maximal degree   of minimal generators of $M$ respectively, and $\rho_I(M)$ is an integer determined by both $M$ and $I$.
\end{abstract}

\maketitle

\section{Introduction}

Unless otherwise stated, we always assume that $R=\bigoplus_{n\geq 0}R_n$ is a standard graded Noetherian algebra over an infinite field $K$, where ``standard graded" means that $R_0=K$ and $R=K[R_1]$. As usual, a nonzero element in $R_1$ is called a {\it linear form} of $R$. Let $M$ be a  finitely generated nonzero $\ZZ$-graded $R$-module.

\begin{Definition}\label{dreduction}{\em A graded ideal $J$ of $R$ is called an {\it $M$-reduction}  if $J$ is generated by linear forms  such that $(JM)_n=M_n$ for $n\gg0$; An $M$-reduction  is called {\it minimal} if  it does not contain any other $M$-reduction. The {\it reduction number} of $M$ with respect to $J$ is defined to be $$r_J(M):=\max\{n\in \ZZ\:\; (JM)_n\neq M_n\}.$$ The {\it reduction number} of $M$ is $$r(M):=\min\{r_J(M)\:\; J \mbox{ is a minimal } M\mbox{-reduction}\}.$$
}\end{Definition}

  Let $I$ be a graded ideal of $R$. In this paper, we are interested in the following natural
problem : is $r(I^nM)$ a linear function of $n$ for all $n\gg  0$?   This problem
is inspired by  the asymptotic behaviour of the so-called Castelnuovo-Mumford
regularity $\mathrm{reg}(I^nM)$. It was first shown in \cite{CHT} and \cite{K}   for the case $R$ being a
polynomial ring over a field, and then in \cite{TW} for the general case (namely, when $R$ is a standard graded algebra over a Noetherian ring with unity) that $\mathrm{reg}(I^nM)$ is a
linear function of $n$ for all $n\gg 0$. Since the reduction number $r(I^nM)$  is less than
or equal to the Castelnuovo-Mumford regularity $\mathrm{reg}(I^nM)$ by \cite[Proposition 3.2]{T2}, it is bounded above  by a
linear function of $n$.

One of the main obstacles to tackle this question lies in the fact that the reduction number is not a homological invariant.
  Hence we can not detect any relations among  the reduction numbers of modules $M_i$ from the short exact  sequence $0\rightarrow M_1\rightarrow M_2\rightarrow M_3\rightarrow 0$. However we find that if both  $M$ and $N$ share the same dimension, then $r(N)\leq r(M)$ provided that $N$ is a quotient module of $M$. It turns out that this simple fact plays an important role.

 To state our main result, we introduce some more notation.   Again let $M$ be a finitely generated nonzero $\ZZ$-graded $R$-module.  We then use $D(M)$ and $d(M)$ to denote  the largest and least degrees of a minimal system of generators of $M$ respectively. In other words:
$$D(M):=\max\{n\in \ZZ\:\; (M/R_+M)_n\neq 0\} \mbox{ and } d(M):=\min\{n\in \ZZ\:\; M_n\neq 0\}.$$

   Recall that a graded  ideal $J$ contained in $I$ is  an  {\it $M$-reduction of $I$} if $JI^nM=I^{n+1}M$ for some $n>0$. Note that here we do not require that $J$ is generated by linear forms, hence this concept is  different from the notion of $M$-reduction given in Definition~\ref{dreduction}, even though one can show that $JR_+^nM=R_+^{n+1}M$ for $n\gg0$ if and only if $(JM)_n=M_n$ for $n\gg 0$.  The integer $\rho_I(M)$ is defined to be: $$\rho_I(M):=\min\{D(J)\:\; J \mbox{ is an } M\mbox{-reduction of }I\}.$$

   We answer our question in positivity by showing:
\begin{Theorem}[Main Theorem]  There exist  integers $\epsilon_1\geq \epsilon_2\geq  d(M)$ such that $r(I^nM)=\rho_I(M)n+\epsilon_1$ and $D(I^nM)=\rho_I(M)n+\epsilon_2$ for $n\gg0$.
\label{main}

  \end{Theorem}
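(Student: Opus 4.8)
The plan is to treat $r$ and $D$ on the same footing. For a finitely generated graded module $N$ and a graded ideal $\aa\subseteq R_+$ generated by linear forms, write $t(\aa,N):=\max\{j\:\;(N/\aa N)_j\neq 0\}$ for the top nonvanishing degree of $N/\aa N$. Since $R$ is standard graded one has $(\aa N)_j=\aa_1N_{j-1}$, so $t(R_+,N)=D(N)$, while for a minimal $N$-reduction $J$ one has $t(J,N)=r_J(N)$ and hence $r(N)=\min_J t(J,N)$. Moreover, because $J\subseteq R_+$ exhibits $N/R_+N$ as a quotient of $N/JN$, we get $D(N)\le r_J(N)$ for every minimal reduction $J$, and therefore $D(N)\le r(N)$; this is what will force $\epsilon_1\ge\epsilon_2$ at the end. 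The theorem is thereby reduced to the eventual linearity of these top-degree functions along the family $I^nM$, with leading coefficient $\rho_I(M)$.

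First I would establish eventual linearity of $D(I^nM)$ (and, by the same argument, of $t(\aa,I^nM)$ for any fixed $\aa$ generated by linear forms). Form the Rees module $\MM=\bigoplus_{n\ge0}I^nM$ over $\Rees(I)=\bigoplus_{n\ge0}I^n$, bigraded by internal degree and by $n$. Then $\MM/R_+\MM=\bigoplus_n I^nM/R_+I^nM$ is a finitely generated bigraded module over the fiber cone $\MF(I)=\Rees(I)/R_+\Rees(I)$, an algebra generated over $K$ in $n$-degree $1$ by the images of the generators of $I$ (of various internal degrees). For such a module the top internal degree in $n$-degree $n$, namely $D(I^nM)$, is eventually a linear function of $n$; this is the ``generalized regularity'' mechanism and follows from the theory of Hilbert functions of bigraded modules over algebras standard in one direction. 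The slope is at most $\rho_I(M)$: for any $M$-reduction $J$ of $I$ one has $I^{m_0+k}M=J^kI^{m_0}M$, so every minimal generator of $I^{m_0+k}M$ is a product of $k$ generators of $J$ with one of $I^{m_0}M$, giving $D(I^{m_0+k}M)\le kD(J)+D(I^{m_0}M)$ and hence slope $\le D(J)\le\rho_I(M)$.

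Identifying the slope from below is the main obstacle. I must exhibit an $M$-reduction of $I$ whose top generator degree realizes the slope. Here I would fix an inclusion-minimal $M$-reduction $J=(g_1,\dots,g_p)$ with $D(J)=\rho_I(M)$, say $\deg g_p=\rho_I(M)$, pass to the fiber-cone module $\MM/R_+\MM$ over $\MF(J)$, and argue, using the minimality of $J$ together with genericity available over the infinite field $K$, that $g_p$ cannot be discarded, that is, a suitable power $g_p^{\,k}$ times a top generator of $I^{m_0}M$ survives in $\MM/R_+\MM$. This nonvanishing of a leading form is exactly what pins the slope at $\rho_I(M)$, and it is the step I expect to be most delicate, since it is where the reduction-theoretic definition of $\rho_I(M)$ must be matched with the asymptotic growth of generators.

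Finally I would deduce the statement for $r$ from that for $D$ by a monotonicity argument in which the equidimensional quotient inequality is essential. Writing $J_0=(g_1,\dots,g_p)$ for a fixed $M$-reduction of $I$ with $D(J_0)=\rho_I(M)$, multiplication by the $g_i$ gives a surjection $\bigoplus_i(I^nM)(-\deg g_i)\twoheadrightarrow I^{n+1}M$ of modules of the same dimension $\dim M$; hence by quotient-monotonicity $r(I^{n+1}M)\le r\big(\bigoplus_i(I^nM)(-\deg g_i)\big)=r(I^nM)+\max_i\deg g_i\le r(I^nM)+\rho_I(M)$, where the middle equality records that a reduction is simultaneously optimal for a direct sum of shifts. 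Thus $\delta(n):=r(I^nM)-\rho_I(M)n$ satisfies $\delta(n+1)\le\delta(n)$, while $D(I^nM)\le r(I^nM)$ shows $\delta(n)\ge D(I^nM)-\rho_I(M)n$, which is eventually the constant $\epsilon_2$; so $\delta$ is bounded below. A bounded-below, eventually non-increasing integer sequence is eventually constant, giving $r(I^nM)=\rho_I(M)n+\epsilon_1$ with $\epsilon_1\ge\epsilon_2$. That $\epsilon_2\ge d(M)$ then follows by tracking a minimal generator of $M$ of least degree through the powers.
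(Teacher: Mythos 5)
Your overall architecture coincides with the paper's (bigraded fiber module plus the Trung--Wang linearity for $D(I^nM)$; the surjection $\bigoplus_i(I^nM)(-p_i)\twoheadrightarrow I^{n+1}M$ plus equidimensional quotient-monotonicity and a bounded-below non-increasing integer sequence for $r(I^nM)$), but there is a genuine gap at exactly the step you flag as ``most delicate,'' and the proposal does not close it. Everything hinges on the lower bound $D(I^nM)\geq\rho_I(M)n+d(M)$ for all $n>0$ (the paper's Lemma~\ref{D(M)}, quoted from \cite{T}, Lemma 3.1): it pins the slope of $D(I^nM)$ at $\rho_I(M)$, it bounds your $\delta(n)$ below, and it yields $\epsilon_2\geq d(M)$ --- your closing remark about ``tracking a minimal generator of least degree'' only gives $D(I^nM)\geq d(I)\,n+d(M)$, whose slope $d(I)$ may be strictly smaller than $\rho_I(M)$, so the gap propagates to all three conclusions. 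Moreover the route you sketch is misdirected: inclusion-minimality of one reduction $J$ plus genericity over the infinite field is the wrong minimality. A reduction can be inclusion-minimal while another reduction of $I$ has smaller top generating degree, and the nonvanishing of a specific product $g_p^{\,k}w$ in $\mathcal{M}/R_+\mathcal{M}$ does not follow from $g_p$ being non-discardable from $J$. What the correct argument uses is the minimality of the \emph{value} $\rho_I(M)$ over \emph{all} $M$-reductions of $I$, via a short degree count: if $D(I^nM)<\rho_I(M)n+d(M)$ for some $n$, write each homogeneous generator of $I^nM$ as $\sum_i f_im_i$ with $f_i\in I^n$ homogeneous; since $\deg m_i\geq d(M)$ one gets $\deg f_i<\rho_I(M)n$, so every length-$n$ product of homogeneous elements of $I$ occurring in $f_i$ has a factor of degree $<\rho_I(M)$. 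Hence the ideal $J'\subseteq I$ generated by all homogeneous elements of $I$ of degree $<\rho_I(M)$ satisfies $J'I^{n-1}M=I^nM$, i.e.\ $J'$ is an $M$-reduction of $I$ with $D(J')<\rho_I(M)$, contradicting the definition of $\rho_I(M)$. No genericity and no fiber-cone leading-form analysis is needed here.

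The remainder of your proposal is sound and tracks the paper closely: your fiber-cone module $\mathcal{M}/R_+\mathcal{M}$ is precisely the paper's $\mathcal{U}$, to which it applies \cite{TW} (with $\mathrm{reg}(\mathcal{U}_n)=D(I^nM)$ because $\mathcal{U}_n$ has finite length); your elementary bound $D(I^{m_0+k}M)\leq kD(J)+D(I^{m_0}M)$ is a legitimate and even simpler substitute for the paper's comparison $D(I^nM)\leq\mathrm{reg}(I^nM)=\rho_I(M)n+\epsilon$; and your descent $\delta(n+1)\leq\delta(n)$ is the paper's argument verbatim. One small correction in that step: the modules in your surjection need not have dimension $\dim M$ (e.g.\ $R=K[x,y]$, $M=R/(x^2)$, $I=(x)$); what holds, and is all that the quotient-monotonicity of Corollary~\ref{important} requires, is that $\dim(I^nM)$ is constant for $n\gg0$ (Lemma~\ref{krullconstant}), so source and target share a common dimension. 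With Trung's lemma supplied as above, your argument closes and is essentially the paper's proof.
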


     Combining this result with the main result in \cite{TW}   we see that  $\mathrm{reg}(I^nM), r(I^nM)$ and $D(I^nM)$ are all linear functions of $n$ with the same slope.

 \vspace{2mm}
     There is a local version of reduction number. Let $I\subseteq \mathrm{m}$ be an ideal of a Noetherian local ring $(R,\mathrm{m})$. An ideal $J\subseteq I$ is called a {\em reduction} of $I$ if $JI^n=I^{n+1}$ for some $n>0$. A reduction $J$ of $I$ is called a minimal reduction if it does not contain properly  a reduction of $I$. If $J$ is a minimal reduction of $I$, the {\it reduction number} $I$ with respect to $J$, denoted by $r_J(I)$,  is the least positive integer $n$ such that $JI^n=I^{n+1}$. The reduction number of $I$ is defined to be the integer $r(I):=\min\{r_J(I)\:\; J \mbox{ is a minimal reduction of } I\}$. In \cite{H}, it was proved:

     \vspace{2mm}
      {\bf Hoa's Theorem:} Let $I\subseteq \mathrm{m}$ be an ideal of a local ring $(R,\mathrm{m})$. Then there is an integer $s$ such that for any $n\gg 0$ and any minimal reduction $J$ of $I^n$, one has $r_J(I^n)=s$. In particular, $r(I^n)=s$ for all $n\gg 0$.

 \vspace{2mm}
         Comparing Hoa's Theorem with the result obtained in our paper, we see that the graded and local notions of reduction number  are  very different, especially in their asymptotic behaviour. Namely, one has the following statement:

 \vspace{2mm}
     In the local case, $r(I^n)$ is constant for $n\gg 0$; but in the graded case, $r(I^n)$ is a linear function with a nonzero slope for $n\gg 0$.

\vspace{2mm}
In the last section, we introduce the notion of a {\it generalized regularity function} (see Definition~\ref{function}) for a standard graded algebra over a Noetherian ring with unity, which is a generalization of a {\it regularity function} given in \cite{HR}. We prove:

  \begin{Theorem}
    \label{1.3} Assume that  $R$ is a standard graded algebra over a Noetherian ring with unity.  Let   $I$ be a graded ideal of $R$ and $M$  a finitely generated $\ZZ$-graded $R$-module, and let $\Gamma$ be a generalized regularity function for $R$. Then there exists an integer $e\geq d(M)$ such that $\Gamma(I^nM)=\rho_I(M)n+e$ for $n\gg 0$
\end{Theorem}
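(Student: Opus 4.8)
The plan is to follow the architecture of the Main Theorem, replacing the concrete invariants by the abstract function $\Gamma$ and the field-specific steps by the axioms of a generalized regularity function (in particular, that $\Gamma$ is finite, is compatible with shifts via $\Gamma(N(-a))=\Gamma(N)+a$, and dominates the top generating degree, $\Gamma(N)\geq D(N)$, as both $\mathrm{reg}$ and $D$ do). First I would fix a graded $M$-reduction $J\subseteq I$ realizing the minimum defining $\rho:=\rho_I(M)$, so that $D(J)=\rho$ and $JI^nM=I^{n+1}M$ for all $n\geq n_0$. Iterating the latter yields the basic identity
\[
I^{n_0+k}M=J^k L,\qquad L:=I^{n_0}M,\quad k\geq 0,
\]
which transfers every asymptotic question about $I^nM$ to one about the powers $J^kL$. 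Since $J$ is generated in degrees at most $\rho$, the bigraded Rees module $\mathcal W:=\bigoplus_{k\geq 0}J^kL$ is a finitely generated $\Rees(J)$-module whose $R$-generators in Rees-degree $k$ sit in internal degree at most $\rho k+b$, for a constant $b$ independent of $k$. Tracking these generator degrees directly --- an elementary computation with $\mathcal W$ that makes no use of the base being a field --- already settles the case $\Gamma=D$, giving $D(I^nM)=\rho n+\epsilon_2$ for $n\gg0$ with $\epsilon_2\geq d(M)$, the intercept bound entering through a minimal generator of $M$ of degree $d(M)$ pushed up by a top-degree element of $J^{\,n-n_0}$.

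With the $\Gamma=D$ backbone in hand, I would treat a general $\Gamma$ by invoking the generalized-regularity-function machinery built in this section: applied to the finitely generated bigraded $\Rees(J)$-module $\mathcal W$, it shows that $k\mapsto \Gamma(J^kL)$, and hence $n\mapsto\Gamma(I^nM)$, is linear for $n\gg0$, say $\Gamma(I^nM)=an+e$, and it bounds the slope by the generation slope of $\mathcal W$, giving $a\leq\rho$. This step is the $\Gamma$-analogue, valid over a Noetherian base ring, of the asymptotic linearity of $\mathrm{reg}(I^nM)$ proved in \cite{TW}; it is where the axioms of $\Gamma$ are genuinely used, since for an arbitrary generalized regularity function one cannot simply read off the bound from a free presentation as one does for $D$.

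The reverse inequality $a\geq\rho$ and the intercept bound $e\geq d(M)$ then come for free from the backbone together with the domination property $\Gamma\geq D$: from $\Gamma(I^nM)\geq D(I^nM)=\rho n+\epsilon_2$ for $n\gg0$ one reads off $an+e\geq\rho n+\epsilon_2$, whence $a\geq\rho$ and, combined with $a\leq\rho$, the equality $a=\rho$; comparing intercepts gives $e\geq\epsilon_2\geq d(M)$. Thus $\Gamma(I^nM)=\rho_I(M)\,n+e$ with $e\geq d(M)$ for $n\gg0$, as required.

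I expect the main obstacle to be the eventual-linearity step of the second paragraph. In the graded-over-a-field setting of the Main Theorem one has generic linear forms and can pass to minimal reductions by cutting modulo generic elements; over a Noetherian base these tools are unavailable, so the whole weight falls on the axiomatics. Concretely, the difficulty is to isolate exactly which closure properties of $\Gamma$ (its behaviour on short exact sequences and on shifts, together with a bigraded finiteness input) are needed to run the argument of \cite{TW} with $\Gamma$ in place of $\mathrm{reg}$, and to verify that the slope is controlled by the Rees-degree generation of $\mathcal W$ rather than being merely some unidentified constant. Once that technical core is in place, the identification of the slope with $\rho_I(M)$ and the intercept estimate reduce, as above, to comparison with the elementary invariant $D$.
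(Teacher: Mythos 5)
Your proposal has a genuine gap at exactly the point you yourself flag as the ``main obstacle'': the eventual linearity of $n\mapsto\Gamma(I^nM)$ is asserted by appeal to ``generalized-regularity-function machinery'' that neither exists in the paper nor can be obtained by running the argument of \cite{TW} with $\Gamma$ in place of $\reg$. The proof in \cite{TW} is specific to regularity: it exploits the bigraded structure of (local cohomology of) Rees modules, whereas an abstract $\Gamma$ --- defined only by invariance under isomorphism, the shift rule, $\Gamma\geq D$, additivity on direct sums, and the one-sided estimate $\Gamma(P)\leq\max\{\Gamma(N),\Gamma(M)-1\}$ on short exact sequences --- gives no handle whatsoever on your bigraded module $\mathcal W=\bigoplus_k J^kL$; none of the five axioms of Definition~\ref{function} sees the Rees algebra. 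So the second paragraph of your plan, which carries the whole weight, is a restatement of the theorem rather than a proof of it. (Your backbone for $\Gamma=D$ and the final slope/intercept comparison via $\Gamma\geq D$ and Lemma~\ref{D(M)} are fine, but they only become relevant once linearity with slope at most $\rho_I(M)$ is established.)

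What the paper does instead is elementary and uses the axioms exactly where you could not. From the Koszul complex $K.(u_1,\ldots,u_t;M)$ on a minimal generating set of an $M$-reduction $J$ with $D(J)=\rho_I(M)$, it extracts the subcomplex with terms $I^{n-i}K_i$ and proves via Artin--Rees that this is exact for $n\gg0$ (Lemma~\ref{sequence}), yielding
$$0\to I^{n-t}M(-\textstyle\sum_{i=1}^t p_i)\to\cdots\to\bigoplus_{|T|=i}I^{n-i}M(-\textstyle\sum_{j\in T}p_j)\to\cdots\to I^nM\to 0.$$
Iterating axiom (5) along this sequence (Lemma~\ref{repeat}) and using axioms (2) and (4), with $p_j\leq\rho_I(M)$ for all $j$, gives for $Q_n=\Gamma(I^nM)-\rho_I(M)n$ the recursion $Q_n\leq\max\{Q_{n-1},\,Q_{n-2}-1,\ldots,\,Q_{n-t}-t+1\}$ for $n\gg0$; since $Q_n\geq d(M)$ by Lemma~\ref{D(M)} and axiom (3), the window maximum $T_n=\max\{Q_{n-1},\ldots,Q_{n-t}\}$ is eventually non-increasing and bounded below, hence eventually constant $=e$, and a short induction then forces $Q_n=e$ itself for $n\gg0$. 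Note that the slope is never ``identified afterwards'' as in your outline: it is built into the normalization $Q_n$, with the upper control coming from the twists in the Koszul sequence (the homological shift $-i+1$ absorbing $\sum_{j\in T}p_j\leq i\,\rho_I(M)$) and the lower control from $\Gamma\geq D$. If you want to salvage your outline, the missing idea is precisely this exact sequence: a single epimorphism $\bigoplus_j I^{n-1}M(-p_j)\to I^nM$, which sufficed in Theorem~\ref{main} over a field thanks to Corollary~\ref{important}, is useless for an abstract $\Gamma$, because axiom (5) bounds $\Gamma$ of a quotient in terms of $\Gamma$ of the kernel, which is uncontrolled; only the full resolution, whose higher terms are again of the form $I^{n-i}M$ up to shift, closes the recursion.
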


\section{Asymptotic linearity}

In this section we will keep the assumptions and notation in  the preceding section.
 Recall that a   linear form $y_1\in R_1$ is {\it filter regular} on $M$ if $0:_My$ is a module of finite length. A sequence $y_1,\ldots,y_r$ with $y_i\in R_1$ is a {\it  filter regular sequence} on $M$ if $y_i$ is filter regular on $M/(y_1,\ldots,y_{i-1})M$ for all $i=1,\ldots,r$.
 \vspace{2mm}

Let $\dim (M)$ denote the Krull dimension of $M$.  We collect some basic properties of a reduction number in the following two lemmas.

\begin{Lemma}\label{Basic} Let $J$ be an $M$-reduction. Then:

{\em (a)}  $r_J(M)\geq D(M)$;

{\em (b)} $r_J(M)=\min\{n\geq D(M)\:\; (JM)_{n+1}=M_{n+1}\}$;

{\em (c)} $r_J(M(-p))=r_J(M)+p$ for all $p\in \ZZ$;

{\em (d)} if $\dim (M)>0$ and  $n\geq r_J(M)$, then $r_J(M_{\geq n})=n$;

{\em (e)} if $\dim (M)>0$, then $r_J(M)=\min\{n\:\; r_J(M_{\geq n})=n\}$.

\end{Lemma}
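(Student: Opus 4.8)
The plan rests on one elementary observation that I would establish first. Write $J=(\ell_1,\dots,\ell_t)$ with $\ell_i\in R_1$, so $J\subseteq R_+$ and $(JM)_k=\sum_{i=1}^t\ell_iM_{k-1}$ in every degree $k$. Since $M$ is generated in degrees $\le D(M)$ and $R$ is standard graded, I claim $M_{k+1}=R_1M_k$ for every $k\ge D(M)$: an element of $M_{k+1}$ is a sum of terms $rm$ with $m$ a generator of degree $\le D(M)\le k$ and $r\in R_{k+1-\deg m}\subseteq R_{\ge1}=R_1R_{\ge0}$, so a linear form factors off. Because $R_1(JM)_k\subseteq(JM)_{k+1}$, this shows that for $k\ge D(M)$ the equality $(JM)_k=M_k$ forces $(JM)_{k+1}=M_{k+1}$; hence the set $\{k\ge D(M):(JM)_k=M_k\}$ is upward closed. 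This is the technical heart of the lemma.

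From here parts (a), (b), (c) are immediate. For (a), at $k=D(M)$ we have $(R_+M)_{D(M)}\ne M_{D(M)}$, and $JM\subseteq R_+M$ gives $(JM)_{D(M)}\ne M_{D(M)}$, so $r_J(M)\ge D(M)$. For (b), upward closedness together with $r_J(M)\ge D(M)$ shows that among $k\ge D(M)$ the condition $(JM)_{k+1}=M_{k+1}$ holds exactly when $k\ge r_J(M)$, whose minimum is $r_J(M)$. Part (c) is formal: $(J\,M(-p))_k=(JM)_{k-p}$ and $M(-p)_k=M_{k-p}$, so the non-equality degrees shift by $p$ and $J$ remains an $M(-p)$-reduction, giving $r_J(M(-p))=r_J(M)+p$.

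The substance lies in (d) and (e), for which I would first record that $\dim M>0$ forces $M_k\ne0$ for all $k\ge D(M)$: otherwise $M_k=0$ for some such $k$ would give $M_{k'}=R_1\cdots R_1M_k=0$ for all $k'\ge k$, making $M$ finite-dimensional. For (d), fix $n\ge r_J(M)\ge D(M)$. Since $M_{k+1}=R_1M_k$ for $k\ge n$, the truncation $M_{\ge n}$ is generated in the single degree $n$, with $M_n\ne0$. Using $(J\,M_{\ge n})_k=\sum_i\ell_i(M_{\ge n})_{k-1}$, at $k=n$ the right side is $0\ne M_n$, while for $k\ge n+1$ it equals $(JM)_k=M_k$ because $k>r_J(M)$. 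Hence $J$ is an $M_{\ge n}$-reduction and $r_J(M_{\ge n})=n$.

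Finally (e). By (d) every $n\ge r_J(M)$ lies in $\{n:r_J(M_{\ge n})=n\}$, so the minimum is at most $r_J(M)$. For the reverse inequality I would show $r_J(M_{\ge n})\ne n$ whenever $n<r_J(M)$, in two cases. If $n<D(M)$, a direct computation of $M_{\ge n}/R_+M_{\ge n}$ shows its degree-$D(M)$ part equals $(M/R_+M)_{D(M)}\ne0$, so $D(M_{\ge n})=D(M)$ and (a) gives $r_J(M_{\ge n})\ge D(M)>n$. If $D(M)\le n<r_J(M)$, then $M_{\ge n}$ is again generated in degree $n$, and the same degree-by-degree comparison as in (d) shows the non-equality degrees are $\{n,n+1,\dots,r_J(M)\}$, so $r_J(M_{\ge n})=r_J(M)>n$. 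In both cases $r_J(M_{\ge n})\ne n$, hence the minimum equals $r_J(M)$. I expect the main obstacle to be exactly this bookkeeping in (e): one must split on the two ranges of $n$ and, in the low range, correctly identify the top generator degree of the truncation, which is where the hypothesis $\dim M>0$ (ensuring the relevant graded pieces are nonzero) is genuinely used.
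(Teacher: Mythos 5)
Your proof is correct, and for parts (a)--(d) it follows the paper's own argument almost step for step: the same key identity $(JM)_k=J_1M_{k-1}$ (your $\sum_i\ell_iM_{k-1}$), the upward closedness of $\{k\ge D(M)\:\; (JM)_k=M_k\}$ (which the paper packages as ``$D(JM)\le r+1$'' in its proof of (b)), the fact that $M_{\ge n}$ is generated in degree $n$ once $n\ge D(M)$, and the observation that $\dim M>0$ forces the relevant graded pieces to be nonzero. The one genuine divergence is in (e). With $r=\min\{n\:\; r_J(M_{\ge n})=n\}$, the paper disposes of the inequality $r_J(M)\le r$ in a single line: since $J$ is generated by linear forms, $(JM)_{r+i}=(JM_{\ge r})_{r+i}=(M_{\ge r})_{r+i}=M_{r+i}$ for all $i\ge1$, directly from $r_J(M_{\ge r})=r$, with no case analysis and no information needed about $r_J(M_{\ge n})$ for smaller $n$. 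You instead show that every $n<r_J(M)$ fails the defining condition, splitting into $n<D(M)$ and $D(M)\le n<r_J(M)$; this incurs exactly the bookkeeping you flag --- in particular the verification $(R_+M_{\ge n})_{D(M)}=(R_+M)_{D(M)}$, which is correct but requires the standard-graded factorization $R_iM_{D(M)-i}\subseteq R_{D(M)-n}M_n$ for $i>D(M)-n$ --- but it also buys strictly more than the lemma asserts: the exact values $r_J(M_{\ge n})=r_J(M)$ for $D(M)\le n<r_J(M)$ (via upward closedness, which pins the non-equality degrees down to $\{n,\dots,r_J(M)\}$) and $D(M_{\ge n})=D(M)$ for $n<D(M)$, neither of which the paper's shorter argument produces.
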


\begin{proof} (a) Since $JM\subseteq R_+M$, we have  $(R_+M)_n=M_n$ for all $n\geq r_J(M)+1$. Hence $D(M)\leq r_J(M)$.

\vspace{2mm}
(b) Set $r=\min\{n\geq D(M)\:\; (JM)_{n+1}=M_{n+1}\}$. Since $r\geq D(M)$, we have  $D(JM)\leq r+1$, and so $(JM)_{m+1}=M_{m+1}$ for any $m\geq r$. This implies $r_J(M)\leq r$.  The inequality $r\leq r_J(M)$ follows from  $r_J(M)\in\{n\geq D(M)\:\; (JM)_{n+1}=M_{n+1}\}$.

\vspace{2mm}

(c) $r_J(M(-p))=\max\{n\in \ZZ\:\; (JM(-p))_n\neq M(-p)_n\}=$ \\  $\max\{n\in \ZZ\:\; (JM)_{n-p}\neq M_{n-p}\}=\max\{n+p\in \ZZ\:\; (JM)_{n}\neq M_{n}\}=r_J(M)+p$.
\vspace{2mm}

 For the proof of (d) and (e), we first notice that for any $k\in \ZZ$, $(JM)_k=J_1M_{k-1}+J_1R_1M_{k-2}+\cdots=J_1M_{k-1}$. In particular, $(JM)_{k+i}=(JM_{\geq k})_{k+i}$ for all $i\geq 1$.

\vspace{2mm}
(d) Since $n\geq D(M)$, the module $M_{\geq n}$ is generated in degree $n$. In particular $M_n=(M_{\geq n})_n\neq 0$ (for if $M_n=0$ then $M_{\geq n}=0$, so $\dim (M)=0$, a contradiction). This implies $r_J(M_{\geq n})\geq n$ since $(JM_{\geq n})_n=0$. For all $i\geq 1$, we have $(JM_{\geq n})_{n+i}=(JM)_{n+i}=M_{n+i}=(M_{\geq n})_{n+i}$. Hence $r_J(M_{\geq n})\leq n$.
\vspace{2mm}

(e) Set $r=\min\{n\:\; r_J(M_{\geq n})=n\}$. Then $r\leq r_J(M)$ by (d). Since $r_J(M_{\geq r})=r$, we have $(JM)_{r+i}=(JM_{\geq r})_{r+i}=(M_{\geq r})_{r+i}=M_{r+i}$ for all $i\geq 1$, so $r_J(M)\leq r$.  \end{proof}

\begin{Lemma} Assume that $\dim (M)=d$ and let $J$ be  an $M$-reduction. Then

\label{Basic2}

{\em  (a)} $J$ is generated by at least $d$ linear forms;

 {\em (b)} if  $y_1,\ldots,y_d$ is a filter regular sequence on $M$, then $Q=(y_1,\ldots, y_d)$ is a minimal $M$-reduction. In particular, for a generic sequence of  linear forms $y_1,\ldots, y_d$, the ideal $Q=(y_1,\ldots, y_d)$ is a minimal $M$-reduction.

\end{Lemma}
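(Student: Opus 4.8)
The plan is to deduce both parts from a single observation: a graded ideal $J$ generated by linear forms is an $M$-reduction if and only if $M/JM$ has finite length. Indeed, $(JM)_n=M_n$ for $n\gg 0$ says exactly that the finitely generated graded module $M/JM$ vanishes in all high degrees, which for a graded module is equivalent to $\dim(M/JM)\le 0$. With this reformulation the lemma becomes a statement about how linear forms cut down Krull dimension.

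For part (a), write $\mu=\dim_K J_1$ for the minimal number of linear generators of $J$ (possible since $J$ is generated by linear forms). I would apply the principal-ideal dimension bound $\dim(N/zN)\ge \dim N-1$ repeatedly along a linear generating sequence of $J$, obtaining $0\ge \dim(M/JM)\ge \dim M-\mu=d-\mu$, which forces $\mu\ge d$.

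For part (b), assume $d>0$ (the case $d=0$ is immediate with $Q=(0)$), and set $M_i=M/(y_1,\dots,y_i)M$. The crux is to show that a filter regular linear form $y$ on a module $N$ with $\dim N>0$ satisfies $\dim(N/yN)=\dim N-1$. This is where the definition is used: $0:_N y$ having finite length is equivalent to $y$ avoiding every prime in $\Ass(N)\setminus\{R_+\}$, and in particular $y$ avoids all minimal primes of $N$ of maximal (positive) dimension, so no prime of $\Supp(N)\cap V(y)$ has dimension $\dim N$, giving $\dim(N/yN)\le \dim N-1$; equality follows from the principal-ideal bound. Iterating down the filter regular sequence yields $\dim M_i=d-i$ for $0\le i\le d$, so $\dim M_d=\dim(M/QM)=0$ and hence $Q$ is an $M$-reduction. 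The same count shows each $y_i$ cannot lie in $(y_1,\dots,y_{i-1})$: otherwise $y_i$ would annihilate $M_{i-1}$, forcing $\dim M_{i-1}=0$ and contradicting $\dim M_{i-1}=d-(i-1)>0$. Thus $y_1,\dots,y_d$ are $K$-linearly independent and $\dim_K Q_1=d$.

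Minimality then follows by combining this with part (a): if $J\subseteq Q$ is any $M$-reduction, then $J_1\subseteq Q_1$ and $\dim_K J_1\ge d=\dim_K Q_1$, forcing $J_1=Q_1$ and hence $J=R\,Q_1=Q$, since both ideals are generated by their linear parts. For the final genericity assertion I would invoke that over the infinite field $K$ a generic choice of $d$ linear forms avoids each of the finitely many associated primes of the successive quotients $M_{i-1}$ (the prime-avoidance argument underlying the existence of filter regular sequences), so a generic sequence is filter regular and the preceding analysis applies verbatim. I expect the main obstacle to be the dimension-drop step $\dim(N/yN)=\dim N-1$ for filter regular $y$, i.e.\ translating the finite-length condition on $0:_N y$ into avoidance of the top-dimensional minimal primes; once that is in place, the rest is bookkeeping with linear-form counts.
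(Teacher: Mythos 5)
Your proof is correct, and its skeleton coincides with the paper's: reformulate the reduction property as $\dim (M/JM)\leq 0$, deduce (a) from the iterated principal-ideal bound $\dim (M/JM)\geq \dim (M)-\mu$ (the paper simply cites \cite[Proposition A.4]{BH}, which is exactly this estimate), prove (b) by showing that a filter regular linear form drops the dimension by exactly one, conclude minimality from (a), and obtain the generic statement from the fact that generic linear forms are filter regular (the paper cites \cite[Lemma 4.3.1]{HH}). The one genuine divergence is the mechanism for the dimension-drop step: the paper uses the exact sequence $0\rightarrow 0_M:y_1\rightarrow M(-1)\rightarrow M\rightarrow M/y_1M\rightarrow 0$ and compares degrees of Hilbert polynomials (since $0_M:y_1$ has finite length, the Hilbert polynomials of $M(-1)$ and $M$ have the same degree and leading coefficient, forcing the degree for $M/y_1M$ to drop by one), whereas you translate filter regularity into the statement that $y$ avoids every associated prime of $N$ not containing $R_+$, hence every minimal prime of $\Supp (N)$ of maximal positive dimension, so that $\Supp (N/yN)=\Supp (N)\cap V(y)$ contains no prime of dimension $\dim (N)$. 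Both routes are standard and sound; the Hilbert-polynomial argument is a one-line computation once the four-term sequence is written down, while your prime-avoidance argument is more elementary and makes explicit exactly where the finite-length condition on $0:_Ny$ enters. You also spell out two points the paper leaves implicit: the deduction of minimality from (a) via $J_1\subseteq Q_1$ together with $\dim_K J_1\geq d\geq \dim_K Q_1$ (legitimate because $M$-reductions are by definition generated by their linear parts), and the degenerate case $d=0$. One mild caveat, shared equally by the paper's citation: the generic-sequence assertion requires the openness of the filter regular condition on the whole tuple $(y_1,\ldots,y_d)$, since the associated primes of the successive quotients depend on the earlier choices; your prime-avoidance sketch is at the same level of rigor as the paper's appeal to \cite{HH}.
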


\begin{proof}

(a) Note that $\dim (M/JM)=0$,   one then uses e.g. \cite[Proposition A.4]{BH}.
\vspace{1mm}

(b) From the exact sequence $0\rightarrow 0_M:y_1\rightarrow M(-1)\rightarrow M\rightarrow M/y_1M\rightarrow 0$,   we see that $\dim (M/y_1M)=\dim (M)-1$, by comparing the degrees of Hilbert polynomials of $M$ and $M/y_1M$. Hence $\dim (M/QM)=0$.  It follows that $Q$ is an $M$-reduction and it is minimal by (a). The last statement follows from \cite[Lemma 4.3.1]{HH}, which says a generic linear form is  filter regular on $M$.
\end{proof}

Next, we will show that
$r(M)$ is the reduction number of $M$ with respect to any generic minimal reduction, along a similar line  as given in \cite{T}. For this, we introduce some notation and some basic facts. Let $n\in \ZZ$. We use  $t_n$ for   the dimension of the $K$-space $M_n$ and let $T_i$ be  a  $K$-basis of $M_n$.  Let $x_1,\ldots,x_m$ be a $K$-basis of $R_1$. Then for any ideal $J$ of $R$ generated by $d$ linear forms: $y_1,\ldots, y_d$, there exists a matrix $\alpha=(\alpha_{i,j})\in K^{d\times m}$,
such that $$y_i=\sum_{j=1}^m \alpha_{i,j}x_j$$ for $i=1,\ldots,d$.
We call $\alpha$ the {\it parameterized matrix} of $J$.

 For $n\in \ZZ$, the vector space $(JM)_n=J_1M_{n-1}$ is spanned by vectors $y_ig$, with $g\in T_{n-1}$ and $1\leq i\leq d$. Let $\mathbb{M}_n(\alpha)$ denote the matrix of coefficients of those elements written as linear combinations of elements in $T_n$. Then $J$ is a minimal $M$-reduction if and only if $d=\dim (M)$ and there exists  $n\geq D(M)$ such that $\mbox{rank}\  \mathbb{M}_{n+1}(\alpha)=t_{n+1}$ in view of Lemma~\ref{Basic}(b). In this case: $$r_J(M)=\min\{n\geq D(M)\:\; \mbox{rank}\  \mathbb{M}_{n+1}(\alpha)=t_{n+1}\}.$$

\begin{Proposition} Suppose that $\dim (M)=d$. Then for a generic sequence of  linear forms $y_1,\ldots,y_d$ of $R$, we have $r(M)=r_{(y_1,\ldots,y_d)}(M).$
\label{generic}
\end{Proposition}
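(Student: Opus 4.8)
The plan is to run everything through the parameterized-matrix description set up immediately before the statement, which encodes $r_J(M)$ in terms of the ranks of the matrices $\mathbb{M}_{n+1}(\alpha)$, and to combine it with the genericity of minimal reductions recorded in Lemma~\ref{Basic2}(b). The parameter space is the affine space $K^{d\times m}$ of matrices $\alpha$, which is irreducible; so it suffices to exhibit two nonempty Zariski-open subsets of it and observe that their intersection is again nonempty open, which is exactly what ``for a generic sequence'' means.

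First I would set $r_0:=r(M)$ and choose a minimal $M$-reduction $J_0$ realizing the minimum, that is $r_{J_0}(M)=r_0$, with parameterized matrix $\alpha_0\in K^{d\times m}$. By the rank criterion stated just above, this means precisely that $\mbox{rank}\ \mathbb{M}_{r_0+1}(\alpha_0)=t_{r_0+1}$. The key observation is that, since $y_i=\sum_j\alpha_{i,j}x_j$ gives $y_ig=\sum_j\alpha_{i,j}(x_jg)$, each entry of $\mathbb{M}_{n+1}(\alpha)$ is a linear form in the entries of $\alpha$; hence the condition $\mbox{rank}\ \mathbb{M}_{r_0+1}(\alpha)=t_{r_0+1}$, namely that at least one $t_{r_0+1}\times t_{r_0+1}$ minor is nonzero, cuts out a Zariski-open subset $U_1\subseteq K^{d\times m}$, and $U_1$ is nonempty because $\alpha_0\in U_1$.

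Next, by Lemma~\ref{Basic2}(b) the locus $U_2$ of those $\alpha$ for which $y_1,\dots,y_d$ is a filter regular sequence on $M$, equivalently for which $J_\alpha$ is a minimal $M$-reduction, contains a nonempty Zariski-open subset. Since $K^{d\times m}$ is irreducible, $U_1\cap U_2$ is again nonempty and open. For any $\alpha$ in it I would argue as follows. On the one hand $J_\alpha$ is a minimal $M$-reduction, so $r_{J_\alpha}(M)\geq r(M)=r_0$ by the very definition of $r(M)$ as a minimum over all minimal $M$-reductions. On the other hand, since $\mbox{rank}\ \mathbb{M}_{r_0+1}(\alpha)=t_{r_0+1}$, the rank criterion gives $r_{J_\alpha}(M)=\min\{n\geq D(M)\:\; \mbox{rank}\ \mathbb{M}_{n+1}(\alpha)=t_{n+1}\}\leq r_0$. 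Combining the two inequalities yields $r_{J_\alpha}(M)=r_0=r(M)$, as desired.

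The step I expect to need the most care is confirming that the two open conditions are simultaneously satisfiable rather than merely each generic in isolation; this is exactly where irreducibility of the parameter space $K^{d\times m}$ enters, forcing $U_1\cap U_2\neq\emptyset$. A secondary point is the interplay of the two inequalities: the lower bound uses only that $J_\alpha$ is a minimal reduction (so $U_2$ suffices), while the upper bound uses only the rank at level $r_0+1$ (so $U_1$ suffices), and it is their conjunction that pins the value down. Finally I would dispose of the degenerate case $d=0$ separately, where $M$ has finite length, the empty sequence gives the unique $M$-reduction, and the claim holds trivially.
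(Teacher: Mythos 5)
Your proof is correct, but it takes a genuinely different route from the paper's. The paper argues via a generic point: it base-changes to $R_U=R\otimes_K K(U)$ and $M_U=M\otimes_K K(U)$ for a matrix of indeterminates $U$, proves directly that $z_i=\sum_j u_{ij}x_j$ form a filter regular sequence on $M_U$ (so $(z_1,\ldots,z_d)$ is a minimal $M_U$-reduction by Lemma~\ref{Basic2}(b)), identifies $r(M)=r_{(z_1,\ldots,z_d)}(M_U)$ by comparing the rank of $\mathbb{M}_{n}(U)$ with the ranks of its specializations $\mathbb{M}_n(\alpha)$ (using that $K$ is infinite), and finally takes the \emph{single} open set $\{\alpha: f(\alpha)\neq 0\}$ for a nonzero $t_{r+1}$-minor $f$ of $\mathbb{M}_{r+1}(U)$. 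You instead stay over $K$ and intersect two open loci, using the optimal minimal reduction $J_0$ as a witness for nonemptiness of $U_1$; both arguments hinge on the same rank criterion set up before the proposition. Two remarks. First, your $U_2$ is actually redundant: by the criterion stated just before the proposition, $\operatorname{rank}\mathbb{M}_{r_0+1}(\alpha)=t_{r_0+1}$ with $r_0\geq D(M)$ already forces $J_\alpha$ to be a \emph{minimal} $M$-reduction, which is precisely why the paper's Step 4 gets by with one open set; your extra intersection costs nothing (irreducibility, as you note), and it makes the minimality of $J_\alpha$ independent of the unproved ``only if'' half of that criterion, so it is defensible belt-and-braces. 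Second, you tacitly use two small facts that deserve a word: that $r_0=r_{J_0}(M)\geq D(M)$ (Lemma~\ref{Basic}(a)), so that $r_0$ lies in the index set of the min formula giving the upper bound, and that the minimal reduction $J_0$ attaining $r(M)$ is generated by exactly $d=\dim(M)$ linear forms, so that $\alpha_0$ really lives in $K^{d\times m}$; the latter is implicit in the paper's rank-criterion discussion but is not proved there either, so you and the paper stand on the same ground. As for what each approach buys: the paper's extension-field argument is self-contained on genericity (it proves filter regularity of the $z_i$ over $K(U)$ from scratch, in the spirit of Trung's method in \cite{T}), while your argument avoids base change entirely and is more elementary, at the price of quoting Lemma~\ref{Basic2}(b) and the genericity of filter regular sequences from \cite{HH} for $U_2$. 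Your separate treatment of the degenerate case $d=0$ is also correct and is a point the paper passes over in silence.
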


\begin{proof} Step 1: Let $U=(u_{i,j})_{d\times m}$ be a matrix of indeterminates, and set  $$R_U=R\otimes_K K(U) \mbox{\ and\ } M_U=M\otimes_K K(U).$$ Here $K(U)$ is the fractional field of  the  polynomial ring $K[U]=K[u_{i,j}\:\; 1\leq i\leq d, 1\leq j\leq m]$. Then $R_U$ is a standard graded algebra over $K(U)$ and $M_U$ is a finitely generated $\ZZ$-graded $R_U$-module.  Note that the $K$-basis $T_n$ of $M_n$ is also a $K_U$-basis of $(M_U)_n$ for all $n\in \ZZ$, and that the $K$-basis  $x_1,\ldots,x_m$ of $R_1$ is also a $K(U)$-basis of $(R_U)_1$.

 Set $z_i=\sum_{j=1}^m u_{ij}x_j\in (R_U)_1$ for $i=1,\ldots,d$. We claim that $z_1,\ldots,z_d$ is a filter regular sequence on $M_U$. In fact, let $P$ be an associated prime ideal of $M_U$ with $P\nsupseteq (R_U)_+$. Since $P=pR_U$ for some associated prime ideal  $p$ of $M$, we see that if $z_1\in P$, then $x_i\in p$ for $i=1, \ldots, m$, and so $P\supseteq (R_U)_+$, a contradiction. Hence $z_1$ is filter regular on $M_U$. By induction, $z_1,\ldots,z_d$ is a filter regular sequence on $M_U$, as claimed. Therefore $(z_1,\ldots,z_d)$ is a minimal $M_U$-reduction by Lemma~\ref{Basic2}(b).
 \vspace{1mm}

 Step 2: For $n\in \ZZ$, we define the matrix $\mathbb{M}_n(U)$ similarly  as $\mathbb{M}_n(\alpha)$, that is, $\mathbb{M}_n(U)$ is the matrix of coefficients of  elements $z_ig$ with $1\leq i\leq d$ and $g\in T_{n-1}$ written as linear combinations of elements in $T_n$. It is not hard to see $\mathbb{M}_n(\alpha)$ is a specialization of $\mathbb{M}_n(U)$.

 \vspace{2mm}

 Step 3: We show that $$r(M)=r_{(z_1,\ldots,z_d)}M_U=\min\{n\geq D(M_U)(=D(M))\:\; \mbox{rank}\ \mathbb{M}_{n+1}(U)=t_{n+1} \}.$$

 Put $r=r_{(z_1,\ldots,z_d)}M_U$. Then $\mbox{rank}\  \mathbb{M}_{r+1}(U)=t_{r+1}$. Since $K$ is an infinite field, there exists a matrix $\alpha\in K^{d\times m}$ such that $\mbox{rank}\ \mathbb{M}_n(\alpha)=\mbox{rank}\ \mathbb{M}_n(U)$. This implies $r(M)\leq r$. On the other hand, it is clear that $\mbox{rank}\ \mathbb{M}_n(U)\geq \mbox{rank}\ \mathbb{M}_n(\alpha)$ for any $\alpha\in K^{d\times m}$. Hence $r(M)=r$. The second equality follows from the discussion before this proposition.
  \vspace{2mm}

 Step 4: Let $f(U)$ be a nonzero $t_{r+1}$-minor of $\mathbb{M}_{r+1}(U)$. Then any $\alpha\in K^{d\times m}$ with $f(\alpha)\neq 0$ corresponds to a minimal reduction $J$ such that $r_J(M)=r(M)$.
\end{proof}
\vspace{2mm}

The following corollary is crucial to the proof of our main result.
\vspace{2mm}

\begin{Corollary}\label{important} Let $M_i,i=1,\ldots,n$ be finitely generated nonzero $\ZZ$-graded $R$-modules with the same dimension $d$.

 {\em (a)} If $M_1\twoheadrightarrow M_2$ is an epimorphism of graded modules, then $r(M_1)\geq r(M_2)$;

 {\em (b)}  $r(\oplus_{i=1}^nM_i)=\max\{r(M_i)\:\; 1\leq i\leq n \}$;

{\em (c)} If $0\rightarrow M_1\rightarrow M_2\rightarrow M_3\rightarrow 0$ is a short exact sequence of graded modules, then:
        $r(M_3)\leq r(M_2)\leq \max\{r(M_1),r(M_3)\}.$
\label{1.4}
\end{Corollary}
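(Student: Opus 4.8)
The plan is to reduce all three statements to computations with a \emph{single} system of linear forms. The key enabling fact is Proposition~\ref{generic}: since $M_1,\ldots,M_n$ all have dimension $d$, I would first produce one sequence of linear forms $y_1,\ldots,y_d$ such that $J=(y_1,\ldots,y_d)$ is simultaneously a minimal $M_i$-reduction for every $i$ and realizes $r(M_i)=r_J(M_i)$ for every $i$. To see such a $J$ exists, recall from Step~4 of the proof of Proposition~\ref{generic} that for each fixed module the set of parameter matrices $\alpha\in K^{d\times m}$ for which $r_{J(\alpha)}(M_i)=r(M_i)$ contains a nonempty Zariski-open subset (the nonvanishing locus of a suitable maximal minor). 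Over the infinite field $K$ a finite intersection of nonempty Zariski-open subsets of $K^{d\times m}$ is again nonempty, so I may choose a common $\alpha$; the corresponding $J$ works for $M_1,\ldots,M_n$ at once, and, after intersecting with one further open condition, also for $\oplus_i M_i$, which again has dimension $d$. All that remains is to compare the numbers $r_J(\,\cdot\,)$ for this fixed $J$, and here the reduction number becomes tractable because it is read off degree by degree from the vector-space equalities $(JM)_n=M_n$.

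With $J$ fixed, part~(a) is immediate. If $\phi\colon M_1\twoheadrightarrow M_2$ is a graded epimorphism then $\phi(JM_1)=J\phi(M_1)=JM_2$, so in each degree $(JM_1)_n=(M_1)_n$ forces $(JM_2)_n=\phi((JM_1)_n)=\phi((M_1)_n)=(M_2)_n$. Thus $\{n:(JM_2)_n\neq(M_2)_n\}\subseteq\{n:(JM_1)_n\neq(M_1)_n\}$, whence $r(M_2)=r_J(M_2)\leq r_J(M_1)=r(M_1)$. For part~(b) I would use that $J\bigl(\oplus_i M_i\bigr)=\oplus_i JM_i$ splits in every degree, so $(J\oplus_i M_i)_n=(\oplus_i M_i)_n$ holds exactly when $(JM_i)_n=(M_i)_n$ for all $i$; taking the largest $n$ where this fails gives $r_J(\oplus_i M_i)=\max_i r_J(M_i)$, and the choice of $J$ converts this into the claimed equality of reduction numbers.

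For part~(c), the left inequality $r(M_3)\leq r(M_2)$ follows from part~(a) applied to the surjection $M_2\twoheadrightarrow M_3$. For the right inequality I would again fix a common $J$ for $M_1,M_2,M_3$ and argue degree by degree on the exact sequence $0\to M_1\to M_2\to M_3\to 0$. Suppose $n$ exceeds both $r_J(M_1)$ and $r_J(M_3)$, so that $(JM_1)_n=(M_1)_n$ and $(JM_3)_n=(M_3)_n$. Given $x\in(M_2)_n$, its image in $(M_3)_n=(JM_3)_n$ lifts to some $y\in(JM_2)_n$, and then $x-y$ lies in $(M_1)_n=(JM_1)_n\subseteq(JM_2)_n$, so $x\in(JM_2)_n$. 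Hence $(JM_2)_n=(M_2)_n$ for all such $n$, giving $r_J(M_2)\leq\max\{r_J(M_1),r_J(M_3)\}$ and therefore $r(M_2)\leq\max\{r(M_1),r(M_3)\}$.

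The main obstacle is entirely in the first paragraph: the reduction number is not a homological invariant, so there is no direct functorial relation across the short exact sequence, and the comparisons above are simply false for an arbitrary individual reduction $J$. What rescues the argument is that a \emph{generic} $J$ simultaneously computes $r(M_i)$ for all of the finitely many modules at once; once this common reduction is in hand, every assertion collapses to an elementary degree-wise statement about the surjectivity or exactness of the induced maps on the graded pieces $(M_i)_n$.
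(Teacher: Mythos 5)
Your proposal is correct and follows essentially the same route as the paper: both rest on Proposition~\ref{generic} to produce one generic ideal $J=(y_1,\ldots,y_d)$ that is simultaneously a minimal reduction computing $r(M_i)$ for all the (finitely many, equidimensional) modules at once, after which each part reduces to a degree-wise comparison of $(JM_i)_n$ with $(M_i)_n$. The paper phrases part (a) as tensoring the surjection with $R/Q$ and merely asserts that (b) and (c) are ``similar''; your write-up supplies exactly those omitted details (the intersection of nonempty Zariski-open loci over the infinite field $K$, and the lifting argument for the short exact sequence), so it is a faithful elaboration rather than a different proof.
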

\begin{proof}  (a) In view of Proposition~\ref{generic}, there exists a  sequence of linear forms $y_1,\ldots,y_d$ such that the ideal $Q:=(y_1,\ldots,y_d)$ is  an $M_i$-reduction and $r(M_i)=r_Q(M_i)$ for $i=1,2$. Since $r_Q(M_i)=\max\{n\in \ZZ\:\; (M_i/QM_i)_n\neq 0\}$, the desired inequality follows from the epimorphism: $$M_1/QM_1\rightarrow M_2/QM_2\rightarrow 0,$$ which is obtained by tensoring the epimorphism $M_1\twoheadrightarrow M_2$ with $R/Q$.

The proofs of (b) and (c) are similar.
\end{proof}

  It is known that  $\dim (M/I^nM)$ keeps constant as $n$ grows since $\sqrt{\mathrm{Ann}(M/I^nM)}(=\sqrt{I^n+\mathrm{Ann}(M)})$ is independent of $n$. But this is not the case for $\dim (I^nM)$. For instance, let $R=K[x,y]$, $M=R/(x^2)$ and $I=(x)$. Then $\dim (I^2M)=0$ but $\dim (IM)=1$. However we have:

\begin{Lemma} The function  $\dim (I^nM)$ is constant for $n\gg0$
\label{krullconstant}
\end{Lemma}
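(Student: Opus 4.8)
The plan is to show that the sequence of integers $\dim(I^nM)$ is eventually constant by first establishing that it is a monotone (non-increasing) sequence, and then observing that a monotone sequence of non-negative integers must stabilize. The key structural object to bring in is the Rees module $\Rees(I,M)=\bigoplus_{n\geq 0}I^nM$ over the Rees algebra $\Rees(I)=\bigoplus_{n\geq 0}I^n$, which is a finitely generated graded module. The dimensions $\dim(I^nM)$ are precisely the dimensions of the graded pieces of this module, so asymptotic statements about them can be read off from the structure of $\Rees(I,M)$.

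First I would prove monotonicity, namely $\dim(I^{n+1}M)\leq \dim(I^nM)$ for all $n$. The natural map here is multiplication by a suitable element: for any $x\in I_1$ (or more generally a generator of $I$ in the appropriate degree), the map $I^nM\to I^{n+1}M$ given by $m\mapsto xm$ need not be surjective, so a single element does not immediately suffice. Instead I would use that $I^{n+1}M=\sum_j x_j(I^nM)$ where $x_1,\ldots,x_s$ generate $I$, so $I^{n+1}M$ is a sum of homomorphic images of (shifts of) $I^nM$. Since $\Supp(I^{n+1}M)\subseteq \Supp(I^nM)$ follows from $I^{n+1}M\subseteq I^nM$, and dimension is determined by support via $\dim(N)=\dim(R/\Ann(N))$, the containment $I^{n+1}M\subseteq I^nM$ gives $\Ann(I^nM)\subseteq \Ann(I^{n+1}M)$, hence $\dim(I^{n+1}M)\leq \dim(I^nM)$ directly. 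This is the clean way: the chain $M\supseteq IM\supseteq I^2M\supseteq\cdots$ yields an increasing chain of annihilators and thus a non-increasing chain of dimensions.

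Since each $\dim(I^nM)$ is a non-negative integer and the sequence is non-increasing, it is eventually constant, which is the assertion. I expect the main subtlety to be handling the degenerate cases where some $I^nM=0$: if $I^nM=0$ for some $n$ then all subsequent terms vanish and the statement is immediate, so I would dispose of this case first and assume $I^nM\neq 0$ for all $n$ in the main argument, where $\dim(I^nM)$ is a genuine non-negative integer. The only real content is the containment-implies-monotonicity step; everything else is the elementary observation that a bounded-below non-increasing integer sequence stabilizes. As an alternative that makes the eventual constancy more transparent, one could invoke finite generation of $\Rees(I,M)$ to argue that $\dim(I^nM)$ agrees with a fixed value governed by the relevant associated primes for $n\gg 0$, but the monotonicity argument is the shorter route and is what I would present.
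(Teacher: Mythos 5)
Your proof is correct and rests on exactly the same observation as the paper's: the chain of annihilators $\Ann(I^nM)\subseteq\Ann(I^{n+1}M)$ is increasing, combined with $\dim(N)=\dim(R/\Ann(N))$. The paper concludes via stationarity of that ideal chain (Noetherianness) while you conclude via stabilization of a non-increasing integer sequence, but this is an inessential variation; note that the Rees-module framing in your opening paragraph is never actually used and could be dropped.
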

\begin{proof} This fact follows from  $\dim (I^nM)=\dim (R/\mathrm{Ann}(I^nM))$ and the sequence of ideals $\mathrm{Ann}(I^nM)$ is increasing and hence stationary for $n\gg 0$.
\end{proof}

 We record \cite[Lemma 3.1]{T} in the following lemma  for the later use. Note that in this result  we only require that $R_0$ is a Noetherian ring with unity.

\begin{Lemma} $D(I^nM)\geq \rho_I(M)n+d(M)$ for all $n>0$.
\label{D(M)}

\end{Lemma}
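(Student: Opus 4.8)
The plan is to prove the inequality by exhibiting, for each fixed $n>0$, an explicit $M$-reduction $J$ of $I$ whose top generating degree satisfies $D(J)\le (D(I^nM)-d(M))/n$. Since $\rho_I(M)$ is by definition the least possible value of $D(J')$ over all $M$-reductions $J'$ of $I$, this immediately yields $\rho_I(M)\cdot n\le D(I^nM)-d(M)$, which is exactly the assertion. The whole argument is thus a degree-counting construction in which the number $d(M)$ enters as the universal lower bound for the degree of any homogeneous element of $M$.

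First I would fix $n$, write $D_n=D(I^nM)$ and $a=D_n-d(M)$, and use that $I^nM$ is generated in degrees $\le D_n$. Since every nonzero homogeneous $m\in M$ has $\deg m\ge d(M)$, in any product $gm$ (with $g\in I^n$ and $m\in M$ homogeneous) representing a generator of degree $\le D_n$ one must have $\deg g\le a$. Consequently, if $L$ denotes the ideal generated by the homogeneous elements of $I^n$ of degree $\le a$, then $LM=I^nM$: the inclusion $LM\subseteq I^nM$ is clear because $L\subseteq I^n$, and the reverse inclusion is the content of the previous sentence together with $R\cdot LM=LM$.

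Next comes the key step, a pigeonhole on degrees. Put $b=\lfloor a/n\rfloor$. A homogeneous element $f\in I^n$ of degree $\le a$ is a finite sum of $n$-fold products $g_1\cdots g_n$ with $g_i\in I$ homogeneous and $\sum_i\deg g_i=\deg f\le a$; in each such product at least one factor has degree $\le a/n$, hence $\le b$ since degrees are integers. Splitting off that factor shows every generator of $L$ lies in $(I_{\le b})I^{n-1}$, where $I_{\le b}$ denotes the homogeneous elements of $I$ of degree $\le b$, so $L\subseteq(I_{\le b})I^{n-1}\subseteq I^n$. Combined with $LM=I^nM$ this forces $(I_{\le b})I^{n-1}M=I^nM$. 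Setting $J:=(I_{\le b})$ and multiplying by powers of $I$ gives $JI^sM=I^{s+1}M$ for all $s\ge n-1$, so $J$ is an $M$-reduction of $I$ with $D(J)\le b\le a/n$. Minimality of $\rho_I(M)$ then yields $\rho_I(M)\,n\le bn\le a=D(I^nM)-d(M)$, finishing the proof; the case $n=1$ is handled by the same construction, after one extra multiplication by $I$ to meet the positivity requirement ($s>0$) in the definition of an $M$-reduction of $I$.

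I expect the main obstacle to be the bookkeeping in the pigeonhole step: one must express a homogeneous element of $I^n$ as a sum of $n$-fold products each of the correct total degree, argue that a factor of degree $\le b$ can always be split off into $I_{\le b}$, and verify that the resulting truncated ideal $J$ genuinely satisfies the stabilized equation $JI^{n-1}M=I^nM$ rather than merely $JM=IM$. Getting the floor $\lfloor a/n\rfloor$ to interact correctly with the target bound $\rho_I(M)\,n\le a$ (so that $bn\le a$ holds on the nose) is precisely where the constant $d(M)$ must be tracked carefully throughout.
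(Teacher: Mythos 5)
Your proof is correct. Note that the paper itself contains no argument for this lemma: it is recorded verbatim from Trung's paper ([T, Lemma 3.1]), so the only meaningful comparison is with that source, and your degree-counting construction is essentially Trung's argument run in the direct rather than the contrapositive direction. Trung supposes $D(I^nM)<\rho_I(M)n+d(M)$, observes that every homogeneous $g\in I^n$ of degree $<\rho_I(M)n$ is a sum of $n$-fold products each containing a factor of degree $<\rho_I(M)$, and so obtains an $M$-reduction of $I$ generated in degrees $\leq \rho_I(M)-1$, contradicting the minimality of $\rho_I(M)$; you instead extract the truncated ideal $J=(I_{\leq \lfloor a/n\rfloor})$ unconditionally and read the bound $\rho_I(M)n\leq bn\leq a$ off directly, which is the same pigeonhole. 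All the steps check out: graded Nakayama gives generation of $I^nM$ in degrees $\leq D(I^nM)$ over any Noetherian base $R_0$ (consistent with the paper's remark that the lemma needs only $R_0$ Noetherian with unity), the pigeonhole is valid because all degrees are nonnegative integers, and your $n=1$ adjustment (multiplying $JM=IM$ by $I$ to get $JIM=I^2M$ with exponent $s=1>0$) correctly meets the paper's definition of an $M$-reduction of $I$. The one hypothesis you use silently is $I^nM\neq 0$ (so that $D(I^nM)$ is defined, $a\geq 0$, and $J\neq 0$), but this is implicit in the statement of the lemma itself.
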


We now in the  position to prove the main result of this paper.

\begin{Theorem} There exist  integers $\epsilon_1\geq \epsilon_2\geq  d(M)$ such that $r(I^nM)=\rho_I(M)n+\epsilon_1$ and $D(I^nM)=\rho_I(M)n+\epsilon_2$ for $n\gg0$.
\label{main}
\end{Theorem}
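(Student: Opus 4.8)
The plan is to fix one $M$-reduction $J$ of $I$ that realizes the minimum, so that $D(J)=\rho:=\rho_I(M)$, and to exploit the Rees-type identity $I^{n+1}M=JI^nM$. This identity holds for all $n\ge n_0$, since $JI^{n_0}M=I^{n_0+1}M$ for some $n_0$ and multiplying repeatedly by $I$ propagates it upward. Let $f_1,\dots,f_s$ be a minimal homogeneous generating set of $J$, so $\deg f_i\le\rho$ with $\max_i\deg f_i=\rho$. The unifying idea is that both integer sequences $e_n:=D(I^nM)-\rho n$ and $e'_n:=r(I^nM)-\rho n$ can be shown to be eventually non-increasing and bounded below, hence eventually constant; the two limits are the desired $\epsilon_2$ and $\epsilon_1$.

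For the statement about $D$: from $I^{n+1}M=JI^nM$ every element of $I^{n+1}M$ is an $R$-linear combination of products $f_im$ with $m\in I^nM$, so the minimal generators of $I^{n+1}M$ lie in degrees at most $\rho+D(I^nM)$. This gives $D(I^{n+1}M)\le D(I^nM)+\rho$, i.e. $e_{n+1}\le e_n$ for $n\ge n_0$. By Lemma~\ref{D(M)} one has $e_n\ge d(M)$, so $e_n$ stabilizes to some $\epsilon_2\ge d(M)$ and $D(I^nM)=\rho n+\epsilon_2$ for $n\gg0$.

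For the statement about $r$: first, $r(I^nM)\ge D(I^nM)=\rho n+\epsilon_2$ for $n\gg0$ by Lemma~\ref{Basic}(a), which bounds $e'_n$ below by $\epsilon_2$. For the upper monotonicity I would use the surjection
\[
\bigoplus_{i=1}^s I^nM(-\deg f_i)\longrightarrow I^{n+1}M,\qquad (m_i)\mapsto\sum_i f_i m_i,
\]
whose image is $JI^nM=I^{n+1}M$. By Lemma~\ref{krullconstant} the modules $I^nM$ all share a single Krull dimension $d$ for $n\gg0$, so source and target have the same dimension and Corollary~\ref{important}(a),(b) applies; together with the shift identity $r(N(-p))=r(N)+p$ (immediate from Lemma~\ref{Basic}(c) once one notes that the $N$-reductions and the $N(-p)$-reductions coincide) this yields
\[
r(I^{n+1}M)\le\max_i r(I^nM(-\deg f_i))=r(I^nM)+\max_i\deg f_i=r(I^nM)+\rho.
\]
Hence $e'_{n+1}\le e'_n$ for $n\gg0$, and being bounded below by $\epsilon_2$ it stabilizes to some $\epsilon_1\ge\epsilon_2$, giving $r(I^nM)=\rho n+\epsilon_1$ with $\epsilon_1\ge\epsilon_2\ge d(M)$.

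The main obstacle is exactly the upper bound $r(I^{n+1}M)\le r(I^nM)+\rho$: because the reduction number is not homological, it cannot simply be read off a free presentation, and the argument rests on Corollary~\ref{important}(a), namely that a same-dimensional quotient has no larger reduction number, applied to the Rees surjection above. Verifying the hypotheses requires the constancy of $\dim(I^nM)$ from Lemma~\ref{krullconstant}, and a little care when $d=0$, where the minimal reduction is $Q=0$ and $r$ collapses to the top nonvanishing degree; in that situation the same surjection argument applies verbatim, since the top nonvanishing degree does not increase under surjections and transforms under shifts and direct sums exactly as $r$ does.
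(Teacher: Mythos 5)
Your proposal is correct, and on the reduction-number half it is essentially the paper's own proof: the same epimorphism $\bigoplus_{i}I^{n}M(-p_i)\twoheadrightarrow I^{n+1}M$ obtained from $I^{n+1}M=JI^{n}M$ with $D(J)=\rho_I(M)$, combined with Lemma~\ref{krullconstant}, Lemma~\ref{Basic}(c) and Corollary~\ref{important} to get that $r(I^nM)-\rho_I(M)n$ is eventually non-increasing, and Lemma~\ref{D(M)} together with Lemma~\ref{Basic}(a) for the lower bound; your explicit treatment of the case $d=0$ (where the unique minimal reduction is $0$ and $r$ collapses to the top nonvanishing degree, which behaves under surjections, shifts and direct sums as required) is a point the paper passes over silently, and you handle it correctly. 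Where you genuinely diverge is the $D(I^nM)$ half. The paper does not argue monotonicity of $D(I^nM)-\rho_I(M)n$ directly: it forms the bigraded module $\mathcal{U}=\bigoplus_{n\geq 0}I^nM/R_+(I^nM)$ over the Rees ring $\mathfrak{R}(I)$, invokes \cite[Theorem 2.2]{TW} to conclude that $D(I^nM)=\mathrm{reg}(\mathcal{U}_n)$ is eventually linear, and then pins the slope to $\rho_I(M)$ by sandwiching between the lower bound of Lemma~\ref{D(M)} and the upper bound $D(I^nM)\leq \mathrm{reg}(I^nM)=\rho_I(M)n+\epsilon$ from \cite{TW}. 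Your route --- noting that $I^{n+1}M=JI^nM$ is generated by the products $f_im$ with $m$ among generators of $I^nM$, so $D(I^{n+1}M)\leq D(I^nM)+\rho$, whence $e_n=D(I^nM)-\rho n$ is eventually non-increasing and bounded below by $d(M)$ via Lemma~\ref{D(M)} --- is more elementary and self-contained: it needs no Rees-module machinery and no input from \cite{TW} for this theorem, and it treats $D$ and $r$ by one uniform stabilization argument (indeed it is the same trick the paper itself uses for $r$, and for $\Gamma$ in the final section). What the paper's route buys is mainly conceptual: it exhibits $D(I^nM)$ as a Castelnuovo--Mumford regularity of a bigraded fiber module, placing the statement inside the \cite{TW} framework, but as a proof of this particular theorem your direct argument is at least as tight.
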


\begin{proof}

Let $\mathcal{U}:=\bigoplus_{n\geq 0} I^nM/R_+(I^nM)$ be the module over the Rees ring $\mathfrak{R}(I)$ of $I$. Then $\mathfrak{R}(I)$ has a natural bigraded algebra structure and $\mathcal{U}$ is a bigraded $\mathfrak{R}(I)$-module. It follows that $\mathrm{reg}( \mathcal{U}_n)$ is a linear function for $n\gg0$ by \cite[Theorem 2.2]{TW}. Since $\mathcal{U}_n=I^nM/R_+(I^nM)$ has the finite length, it follows that
$D(I^nM)=\mathrm{reg}(\mathcal{U}_n)$ and so it is a linear function for $n\gg 0$ with  $$D(I^nM)\geq \rho_I(M)n+d(M),\ \forall n>0.$$   On the other hand,  we know that $D(I^nM)\leq \mathrm{reg} (I^nM)$ and for $n\gg0$, one has $$\mathrm{reg} (I^nM)=\rho_I(M)n+\epsilon,$$ where $\epsilon$ is a positive integer independent of $n$, see \cite{TW}. From these facts, it immediately follows that $$D(I^nM)=\rho_I(M)n+\epsilon_2$$ with $\epsilon_2\geq d(M)$ for $n\gg 0$.
\vspace{2mm}

Next, we consider the function $r(I^nM)$. To this end, we set $$Q_n:=r(I^nM)-\rho_I(M)n.$$ Then $Q_n\geq d(M)$ by Lemma~\ref{D(M)} together with  Lemma~\ref{Basic}(a).  Let $J$ be an $M$-reduction of $I$ such that $D(J)=\rho_I(M)$, and let $u_1,\ldots, u_t$ be a minimal generating system of $J$.  Then for $n\gg 0$ we have the following epimorphism of graded $R$-modules: $$\rightarrow\bigoplus_{j=1}^tI^{n-1}M(-p_j)\rightarrow I^nM\rightarrow 0,$$ where $p_i=\mathrm{deg}\ u_i$ for $i=1,\ldots, t$.
In fact, if $n$ is large  enough, then $I^nM=JI^{n-1}M=u_1I^{n-1}M+\cdots+u_tI^{n-1}M$. Thus the desired map can be obtained by compounding the homogeneous surjective maps: $I^{n-1}M(-p_j)\rightarrow u_jI^{n-1}M$, which is defined by: $a\mapsto u_ja$.

   From the epimorphism defined  above, it follows that $$r(I^nM)\leq \max\{r(I^{n-1}M)+p_i\:\; 1\leq i\leq t\}$$ by Lemmas~\ref{krullconstant} and \ref{Basic}(c) and Corollary~\ref{important}. Since  $\rho_I(M)=\max\{p_i\:\; 1\leq i\leq t\}$, it follows that $$Q_n\leq Q_{n-1}$$ for $n\gg 0$  and so $Q_n$ is eventually constant with a value $\epsilon_1\geq d(M)$. This proves that $r(I^nM)=\rho_I(M)n+\epsilon_1$ for $n\gg 0$. Finally, the inequality $\epsilon_1\geq \epsilon_2$ follows from Lemma~\ref{Basic}(a).
\end{proof}

We conclude this section  with the following question:

\vspace{2mm}

\noindent {\bf Question:} {\em What do we can say about the function $r(M/I^nM)$?}

\vspace{2mm}

 This function is non-decreasing from the beginning on by the epimorphism  $$M/I^{n+1}M\rightarrow M/I^nM\rightarrow 0.$$
Since $r(M/I^nM)\leq \mathrm{reg} (M/I^nM)\leq \max\{\mathrm{reg}(M), \mathrm{reg} (I^nM)+1\}$, it is also bounded above by a linear function. In some particular cases, for example when $M=R$ and $I=R_+$, it is  a linear function asymptotically. However we do not know if it is indeed  asymptotically linear in general.

 \section{Generalized Regularity Functions}

In the last section we will extend the epimorphism appeared in  the proof of Theorem~\ref{main} to a long exact sequence and use it to prove the asymptotic linearity of {\it generalized regularity function.}  First we change our setting. From now on,  let $R=\oplus_{n\geq 0} R_n$ be  a standard graded Noetherian algebra over $R_0=A$, where $A$ is  a Noetherian ring with unity. We denote by $\mathcal{M}_R$ the category of finitely generated graded $R-$modules.

\begin{Definition}\label{function} {\em  A {\it generalized regularity function} for $R$ is a function $\Gamma$ which assigns each $M\in \mathcal{M}_R$ an integer $\Gamma(M)$ such that for all  $M,N,P\in \mathcal{M}_R$, one has:

(1) if $M\cong N$, then $\Gamma(M)=\Gamma(N)$;

(2) $\Gamma(M(-p))=\Gamma(M)+p$ for all $p\in \ZZ$;

(3) $\Gamma(M)\geq D(M)$;

(4) $\Gamma(M\oplus N)=\max\{\Gamma(M),\Gamma(N)\}$,

(5) if $0\rightarrow M\rightarrow N\rightarrow P\rightarrow 0$ is a short exact sequence of graded modules, then $\Gamma(P)\leq \max\{\Gamma(N),\Gamma(M)-1\}$.}
\end{Definition}

This concept generalizes the notion  of regularity function defined in \cite{HR} in two hands. Firstly we do not require that $R_0$ is field; Secondly, some requirements in the definition of regularity function (see \cite[Definition 1.1]{HR}) are dropped. Note that  the condition that  $\Gamma(M)\geq D(M)$ in our definition does not appear in \cite[Definition 1.1]{HR}, but it can be deduced from that  in view of \cite[Proposition 1.2(a)]{HR}.   We begin with:
\begin{Lemma} \label{repeat} Let  $0\rightarrow M_t\rightarrow M_{t-1}\rightarrow \cdots\rightarrow M_1\rightarrow M_0\rightarrow 0$ be an exact sequence of graded modules. Then $\Gamma(M_0)\leq \max\{\Gamma(M_1),\Gamma(M_2)-1,\ldots,\Gamma(M_t)-t+1\}.$
\end{Lemma}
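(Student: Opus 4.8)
The plan is to prove this by induction on the length $t$ of the complex, reducing the general statement to repeated application of the short exact sequence axiom (5) in Definition~\ref{function}. The base case $t=1$ is immediate: the sequence $0\to M_1\to M_0\to 0$ forces $M_1\cong M_0$, so $\Gamma(M_0)=\Gamma(M_1)$ by axiom (1), which is exactly the asserted bound. (The case $t=2$ is just axiom (5) verbatim, and can serve as the base case instead if one prefers.)

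For the inductive step I would break the long exact sequence at the right-hand end. Let $N:=\Ker(M_1\to M_0)=\Im(M_2\to M_1)$, which is a graded submodule of the finitely generated module $M_1$; since $R$ is Noetherian, $N$ lies in $\mathcal{M}_R$ and so $\Gamma(N)$ is defined. The exactness at $M_0$ and at $M_1$ yields the short exact sequence $0\to N\to M_1\to M_0\to 0$, to which axiom (5) applies to give
$$\Gamma(M_0)\leq \max\{\Gamma(M_1),\ \Gamma(N)-1\}.$$
Simultaneously, the map $M_2\to M_1$ factors through a surjection $M_2\twoheadrightarrow N$ whose kernel is $\Im(M_3\to M_2)$, so that
$$0\to M_t\to M_{t-1}\to\cdots\to M_2\to N\to 0$$
is an exact sequence of graded modules of length $t-1$. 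Applying the induction hypothesis to it (with $N$ playing the role of the right-hand term) gives
$$\Gamma(N)\leq \max\{\Gamma(M_2),\ \Gamma(M_3)-1,\ \ldots,\ \Gamma(M_t)-(t-2)\}.$$

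It then remains only to combine the two displays: subtracting $1$ from the second bound and substituting into the first produces
$$\Gamma(M_0)\leq \max\{\Gamma(M_1),\ \Gamma(M_2)-1,\ \Gamma(M_3)-2,\ \ldots,\ \Gamma(M_t)-(t-1)\},$$
which is precisely the claimed inequality, since the $i$-th term is $\Gamma(M_i)-(i-1)$. There is no real analytic obstacle here; the proof is a bookkeeping argument, and the only points demanding care are verifying that the spliced object $N$ is genuinely a finitely generated graded $R$-module (so that $\Gamma(N)$ is meaningful, which is where Noetherianity of $R$ enters) and tracking the shift indices so that the $-1$ from axiom (5) accumulates correctly across the $t-1$ splicings. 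Accordingly, the step I would watch most closely is the index arithmetic in the final combination, to confirm that the shifts telescope into the stated bound $\Gamma(M_t)-t+1$ rather than being off by one.
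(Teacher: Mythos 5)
Your proof is correct and follows essentially the same route as the paper, which simply says to split the long exact sequence into $t-1$ short exact sequences and apply axiom (5) to each; your induction with $N=\Ker(M_1\to M_0)$ is exactly that splicing argument made explicit, and your index bookkeeping ($\Gamma(N)-1$ absorbing the shifts to give $\Gamma(M_i)-(i-1)$) checks out.
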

\begin{proof} Split the exact sequence into the $(t-1)$ short exact sequences and apply the  fifth condition of Definition~\ref{function} to those  sequences.
\end{proof}

In the following lemma we will present an useful exact sequence of graded modules, which is an improvement of the exact sequence appeared in the proof of \cite[Theorem 4.7.6]{BH}. Let $[t]$ denote the set of integers $\{1,\ldots,t\}$.

\begin{Lemma}
\label{sequence}
Let $J$ be an $M$-reduction of $I$ and assume that $J$ is minimally generated by $u_1,\ldots,u_t$ with $\deg u_i=p_i$ for $i=1,\ldots,t$. Then for all $n\gg0$, we have the following exact sequence of graded modules:
\[
0\rightarrow I^{n-t}M(-\sum_{i=1}^tp_i)\rightarrow \cdots \rightarrow  \bigoplus_{T\subseteq [t], |T|=i}I^{n-i}M(-\sum_{j\in T}p_j)  \rightarrow \cdots \rightarrow I^nM\rightarrow 0.
\]

\end{Lemma}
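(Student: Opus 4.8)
The plan is to realize the displayed complex as a single graded strand of a Koszul complex over the Rees algebra, and then to deduce its exactness from the vanishing, in all large degrees, of the corresponding Koszul homology.

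First I would pass to the Rees ring $\mathfrak{R}=\mathfrak{R}(I)=\bigoplus_{n\geq 0}I^nt^n$ and the Rees module $\mathcal{R}=\bigoplus_{n\geq 0}I^nM$, which is a finitely generated $\mathfrak{R}$-module (note that $\mathfrak{R}$ is a finitely generated $R$-algebra, hence Noetherian, because $R$ is). Viewing $\mathfrak{R}$ as bigraded by $t$-degree and internal degree, the elements $v_i:=u_it$ sit in bidegree $(1,p_i)$, so I can form the Koszul complex $K_\bullet(v_1,\dots,v_t;\mathcal{R})$. Its $i$-th term is a direct sum, over the $i$-element subsets $T\subseteq[t]$, of copies of $\mathcal{R}$ shifted down by $i$ in $t$-degree and by $\sum_{j\in T}p_j$ in internal degree; consequently its part of $t$-degree $n$ is exactly $\bigoplus_{|T|=i}I^{n-i}M(-\sum_{j\in T}p_j)$, and the Koszul differential restricts, for $T=\{j_1<\dots<j_i\}$, to $a\,e_T\mapsto\sum_{k=1}^i(-1)^{k-1}u_{j_k}a\,e_{T\setminus\{j_k\}}$. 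Thus the $t$-degree-$n$ strand of $K_\bullet(v_1,\dots,v_t;\mathcal{R})$ is precisely the complex in the statement, and its homology in $t$-degree $n$ is the degree-$n$ component of the Koszul homology $H_i:=H_i(v_1,\dots,v_t;\mathcal{R})$.

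The heart of the argument is to show that each $H_i$ vanishes in all sufficiently large $t$-degrees. Since $\mathcal{R}$ and the Koszul terms are finitely generated over the Noetherian ring $\mathfrak{R}$, each $H_i$ is a finitely generated bigraded $\mathfrak{R}$-module, and it is annihilated by the ideal $(v_1,\dots,v_t)$. Here I would invoke the reduction hypothesis: the $t$-degree-$m$ component of $(v_1,\dots,v_t)\mathfrak{R}$ is $JI^{m-1}t^m$, which equals $I^mt^m=\mathfrak{R}_m$ as soon as $m$ is large enough that $JI^{m-1}M=I^mM$. Hence $(v_1,\dots,v_t)\mathfrak{R}\supseteq\mathfrak{R}_{\geq s}$ for some $s$, so every $H_i$ is annihilated by $\mathfrak{R}_{\geq s}$. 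A finitely generated bigraded module annihilated by $\mathfrak{R}_{\geq s}$ must vanish in all $t$-degrees exceeding $s$ plus the largest $t$-degree occurring in a finite generating set, and this one checks degree by degree.

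Taking the maximum of these finitely many thresholds over $i=0,\dots,t$ then yields an $n_0$ with the property that, for every $n\geq n_0$, all $H_i$ vanish in $t$-degree $n$; this is exactly the asserted exactness of the complex. I expect the main obstacle to be the exactness itself, and in particular injectivity at the left-hand end, which is invisible from the single surjection $JI^{n-1}M=I^nM$ used earlier; the value of the Rees--Koszul reformulation is that it converts this into the manageable statement that a finitely generated Koszul homology module, being annihilated by an ideal that swallows the irrelevant ideal in high degree, is concentrated in bounded $t$-degree. Two bookkeeping points deserve care: checking the homogeneity so that the internal twists $-\sum_{j\in T}p_j$ come out exactly as written, and recording at the outset that $JI^{m-1}M=I^mM$ persists for all large $m$ once it holds for a single value, which is what makes $\mathfrak{R}_{\geq s}$ land inside $(v_1,\dots,v_t)\mathfrak{R}$.
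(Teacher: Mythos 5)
Your Rees--Koszul reformulation is sound as far as the strand identification goes, but the heart of your argument contains a genuine gap: the claimed inclusion $(v_1,\dots,v_t)\mathfrak{R}\supseteq \mathfrak{R}_{\geq s}$. You justify it by saying that the $t$-degree-$m$ component $JI^{m-1}t^m$ of $(v_1,\dots,v_t)\mathfrak{R}$ equals $I^m t^m=\mathfrak{R}_m$ ``as soon as $JI^{m-1}M=I^mM$,'' but this conflates an ideal equality in $R$ with a module equality on $M$. The hypothesis is only that $J$ is an \emph{$M$-reduction} of $I$, which gives $JI^{m-1}M=I^mM$ for $m\gg0$ and nothing about $JI^{m-1}$ versus $I^m$ as ideals. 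Concretely, take $R=K[x,y]$, $I=(x,y)$, $J=(x)$, $M=R/(y)$: then $JI^{m-1}M=I^mM$ for all $m\geq 1$, yet $y^m\in I^m\setminus JI^{m-1}$ for every $m$, so $(v_1,\dots,v_t)\mathfrak{R}$ contains no $\mathfrak{R}_{\geq s}$. Nor can you fall back on the bare facts you did establish---that $H_i$ is finitely generated over $\mathfrak{R}$ and annihilated by $(v_1,\dots,v_t)$---since the quotient ring $\mathfrak{R}/(v_1,\dots,v_t)$ has $t$-degree-$m$ part $I^m/JI^{m-1}$, which can be nonzero for all $m$; a finitely generated module over it need not vanish in high $t$-degrees.

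The gap is repairable inside your framework, and the repair pinpoints where the $M$-reduction hypothesis must actually enter: work over the subalgebra $B=R[u_1t,\dots,u_tt]\subseteq\mathfrak{R}$ (a quotient of $R[T_1,\dots,T_t]$, hence Noetherian) rather than over all of $\mathfrak{R}$. The hypothesis $JI^{n}M=I^{n+1}M$ for $n\gg0$ is exactly what makes $\mathcal{R}=\bigoplus_n I^nM$ a \emph{finitely generated} $B$-module, and over $B$ the ideal $(v_1,\dots,v_t)$ genuinely is the irrelevant ideal, since $B_m=J^mt^m$. The Koszul complex and its homology are unchanged by restricting scalars to $B$, so each $H_i$ becomes a finitely generated graded $B$-module killed by $B_{\geq 1}$, i.e.\ a finitely generated $R$-module concentrated in finitely many $t$-degrees, and your strand argument then concludes. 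For comparison, the paper avoids the Rees algebra entirely: it truncates the Koszul complex $K_\bullet(u_1,\dots,u_t;M)$ over $R$ to the subcomplex $0\to I^{n-t}K_t\to\cdots\to I^nK_0\to 0$ and proves its exactness for $n\gg0$ directly via the Artin--Rees lemma, writing each cycle as $a=\sum_j u_ja_j$ with the $a_j$ cycles and exhibiting $a=\partial(\sum_j e_j\wedge a_j)$. That argument is more elementary and hands-on; your route, once corrected as above, is the more conceptual one, but as written the key vanishing step fails precisely because $J$ need not be a reduction of the ideal $I$.
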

\begin{proof} Let $K.=K.(u_1,\ldots,u_t;M)$ be the Koszul complex and $\partial$ be its differential. Since $\partial(K_{i+1})\subseteq IK_i$ for each $i$,   we have the following  subcomplex of $K.$ for all integer $n\geq t$:
$$K.^{(n)}: \  0\rightarrow I^{n-t}K_t\rightarrow I^{n-t+1}K_{t-1}\rightarrow \cdots\rightarrow I^{n-1}K_1\rightarrow I^nK_0\rightarrow  0.$$

We will show that $K.^{(n)}$ is exact for $n\gg0$ (and thus our result follows.)

Fix $m>0$ such that $JI^mM=I^{m+1}M$. Then $I^{n-i}K_{i}=J^{n-i-m}I^mK_{i}$ and by Artin-Rees Lemma it follows that $$Z_i(K.^{(n)})=I^{n-i}K_{i}\cap Z_i(K.)=J(J^{n-i-m-1}I^mK_{i}\cap Z_i(K.))$$ for $i=0,\ldots,t$ and for $n\gg 0$.

Now given $n$ large enough such that the equalities above hold simultaneously. Let $a\in Z_i(K.^{(n)})$. Then $a=\sum _{j=1}^t u_ja_j$ with $a_j\in J^{n-i-m-1}I^mK_{t-i}\cap Z_i(K.)$.
Let $e_1,\ldots,e_t$ be a $K$- basis of $K_1(u_1,\ldots,u_t; R)$ with $\partial_R (e_j)=u_j$ for $j=1,\ldots,t$. Here $\partial_R$ denotes the differential of $K_1(u_1,\ldots,u_t; R)$. Then $w=\sum_{j=1}^t e_j a_j\in I^{n-i-1}K_{i+1}$ and $\partial (w)=a-\sum_{j=1}^t e_j \partial(a_i)=a$ by \cite[Proposition 1.6.2]{BH}. Thus $K.^{(n)}$ is indeed exact.
\end{proof}

\begin{Remark}
\label{remark}
{\em
 Artin-Rees Lemma says that if $N$ is a submodule of $M$ then there exists an integer $m>0$ such that $I^nM\cap N=I^{n-m}(I^mM\cap N)$ for all $n>m$. However the assumption that $N$ is a submodule of $M$ is not essential. In fact, if  $N$ is not a submodule of $M$ then $I^nM\cap N=I^nM\cap (N\cap M)=I^{n-m}(I^mM\cap (N\cap M))=I^{n-m}(I^mM\cap N)$ for all $n>m$. Therefore Artin-Rees Lemma still holds. We use Artin-Rees Lemma of this version in the proof of Lemma~\ref{sequence}.
}

\end{Remark}

  In the proof of our last result we will use the same strategy  as in the proof of \cite[Theorem 5]{K}.

\begin{Theorem} Let   $I$ be a graded ideal of $R$ and $M\in \mathcal{M}_R$, and let $\Gamma$ be a generalized regularity function for $R$. Then there exists an integer $e\geq d(M)$ such that $\Gamma(I^nM)=\rho_I(M)n+e$ for $n\gg 0$

\end{Theorem}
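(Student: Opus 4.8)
The plan is to mimic the proof of Theorem~\ref{main}, but to replace the single epimorphism used there by the long exact sequence of Lemma~\ref{sequence} and to control $\Gamma$ along it by means of Lemma~\ref{repeat}. First I would fix an $M$-reduction $J$ of $I$ with $D(J)=\rho_I(M)$, take a minimal generating set $u_1,\dots,u_t$ with $\deg u_i=p_i$, and reorder so that $p_1\ge p_2\ge\cdots\ge p_t$; thus $\rho_I(M)=p_1$. For $n\gg 0$ Lemma~\ref{sequence} supplies the exact sequence whose $i$-th term ($1\le i\le t$) is $\bigoplus_{|T|=i}I^{n-i}M(-\sum_{j\in T}p_j)$ and whose end term is $I^nM$. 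Applying Lemma~\ref{repeat}, and then conditions (2) and (4) of Definition~\ref{function} — which turn the direct sum into a maximum and absorb each twist into a shift of $\Gamma$ — I obtain
$$\Gamma(I^nM)\le \max_{1\le i\le t}\Bigl\{\Gamma(I^{n-i}M)+(p_1+\cdots+p_i)-(i-1)\Bigr\},$$
where for each $i$ the largest twist $\sum_{j\in T}p_j$ over $|T|=i$ is realized by the $i$ biggest $p_j$'s.

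Next I would normalize by setting $F_n:=\Gamma(I^nM)-\rho_I(M)\,n$. Substituting $\rho_I(M)=p_1$ rewrites the displayed inequality as
$$F_n\le\max_{1\le i\le t}\{F_{n-i}+c_i\},\qquad c_i:=\sum_{k=2}^i(p_k-p_1)-(i-1),$$
valid for $n\gg 0$. The crucial numerical observation is that $c_1=0$, while $c_i\le-(i-1)\le-1$ for every $i\ge 2$, since each $p_k-p_1\le 0$. At the same time, condition (3) together with Lemma~\ref{D(M)} gives $\Gamma(I^nM)\ge D(I^nM)\ge\rho_I(M)n+d(M)$, so that $F_n\ge d(M)$ is bounded below. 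Since the inequality in particular yields $F_n\le\max\{F_{n-1},\dots,F_{n-t}\}$, the length-$t$ window maximum $H_n:=\max\{F_{n-t+1},\dots,F_n\}$ is non-increasing for $n\gg 0$; being bounded below, it is eventually equal to some constant $e\ge d(M)$.

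It then remains to upgrade ``$H_n\equiv e$'' to ``$F_n\equiv e$'', and this is the step I expect to be the \emph{main obstacle}, exactly as in the strategy of \cite[Theorem 5]{K}. Here the strict negativity $c_i<0$ for $i\ge 2$ is decisive: since $F_{n-i}\le e$ for all $n\gg 0$, the only summand on the right-hand side that can attain the value $e$ is the one with $c_1=0$, so whenever $F_n=e$ one is forced to have $F_{n-1}=e$. Combined with the fact that $H_n\equiv e$ guarantees an index with $F=e$ in every window of length $t$, a short descent argument finishes the proof: if some large $n_0$ had $F_{n_0}<e$, I would pick the least index $m'>n_0$ with $F_{m'}=e$ (which exists and satisfies $m'\le n_0+t$), and the implication above would propagate the value $e$ down to $m'-1$, contradicting either the minimality of $m'$ or the assumption $F_{n_0}<e$. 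Hence $F_n=e$ for all $n\gg 0$, which is precisely $\Gamma(I^nM)=\rho_I(M)n+e$ with $e\ge d(M)$.
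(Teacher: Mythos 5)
Your proposal is correct and follows essentially the same route as the paper: the exact sequence of Lemma~\ref{sequence} combined with Lemma~\ref{repeat} and conditions (2)--(4) of Definition~\ref{function} yields the recursion on the normalized quantities, whose window maximum is non-increasing and bounded below by $d(M)$, exactly as in the paper's proof (the paper bounds every twist by $\sum_{j\in T}p_j\le i\rho_I(M)$, which gives the same effective constants as your $c_i\le-(i-1)$). The only cosmetic difference is the endgame: you propagate $F_n=e$ \emph{downward} from a witness in each window, while the paper argues \emph{upward} by showing $Q_m<e$ would force $Q_n<e$ for all $n\ge m$, contradicting the eventual constancy of the window maximum — both are valid and equivalent in substance.
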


\begin{proof} Put $P=\rho_I(M)$ and let $Q_n=\Gamma(I^nM)-nP$. Then $Q_n\geq d(M)$ by Lemma~\ref{D(M)} and the third condition in Definition~\ref{function}.  Let $J$ be an $M$-reduction of $I$ such that $D(J)=\rho_I(M)$.  Apply Lemma~\ref{repeat} to the exact sequence in Lemma~\ref{sequence} (see Remark~\ref{remark}(3)), we have:\  \  $Pn+Q_n$ $$\leq \max\{(n-1)P+Q_{n-1}+P,(n-2)P+Q_{n-2}+2P-1,\ldots, (n-t)P+Q_{n-t}+tP-t+1\}.$$
It follows that $Q_n\leq \max\{Q_{n-1}, Q_{n-2}-1,\ldots, Q_{n-t}-t+1\}$ for $n\gg 0$. Put $T_n=\max\{Q_{n-1}, Q_{n-2},\ldots, Q_{n-t}\}$. Then $T_{n+1}\leq T_n$ for $n\gg 0$ and so $T_n$ is eventually constant with a value $e\geq d(M)$.  Let $m$ be an integer such that $T_n=e$ for all $n\geq m$. We claim that $Q_n=e$ for all $n\geq m$.

Assume on the contrary that $Q_m\neq e$. Since $Q_n\leq T_n$,  we have  $Q_m <e$. It follows that $Q_{m+1}\leq \max\{Q_m, Q_{m-1}-1,\ldots, Q_{m-t+1}-t+1\}\leq \max\{Q_m, T_m-1\}<e$. Thus $Q_n<e$ for all $n\geq m$ by induction, a contradiction. Hence $Q_m=e$ and  in the same reason we have $Q_n=e$ for all $n\geq m$, as claimed. This implies $\Gamma(I^nM)=Pn+e$ for all $n\gg 0$.  \end{proof}

\vspace{2mm}
\noindent {\bf Acknowledgement}

 We thank the anonymous referee for helpful comments. In particular, we are grateful for presenting us a much simpler proof for Lemma~\ref{krullconstant} and a more  inspiring  proof for the asymptotic linearity of $D(I^nM)$.

\end{document}